\newtheoremstyle{case}{}{}{}{}{}{:}{ }{}
\theoremstyle{case}
\newcommand{\be}{\begin{equation}}
	\newcommand{\ee}{\end{equation}}
\newcommand{\ben}{\begin{eqnarray*}}
	\newcommand{\een}{\end{eqnarray*}}
\newtheorem{examp}{\sc Example}
\newtheorem{remk}{\sc Remark}
\newtheorem{corol}{\sc Corollary}
\newtheorem{lemma}{\sc Lemma}
\newtheorem{theorem}{\sc Theorem}
\newtheorem{defn}{\sc Definition}
\newcommand{\bt}{\begin{theorem}}
	\newcommand{\et}{\end{theorem}}
\newcommand{\bl}{\begin{lemma}}
	\newcommand{\el}{\end{lemma}}
\newcommand{\bed}{\begin{defn}}
	\newcommand{\eed}{\end{defn}}
\newcommand{\brem}{\begin{remk}}
	\newcommand{\erem}{\end{remk}}
\newcommand{\bex}{\begin{examp}}
	\newcommand{\eex}{\end{examp}}
\newcommand{\bcl}{\begin{corol}}
	\newcommand{\ecl}{\end{corol}}
\newcommand{\NI}{\noindent}
\theoremstyle{definition}
\theoremstyle{remark}
\numberwithin{equation}{section}
\numberwithin{theorem}{section}
\numberwithin{lemma}{section}
\begin{document}
	\title {On Zero-Sum Two Person Perfect Information Stochastic Games}
	%with limiting ratio average pay-offs}
\author{K. G. Bakshi$^{a,1}$ $\footnote{Corresponding author}$, S. Sinha$^{a,2}$\\
\emph{\small $^{a,1}$ Department Of Mathematics, Jadavpur University, Kolkata, 700 032, India.}\\	
\emph{\small $^{a,2}$ Department Of Mathematics, Jadavpur University, Kolkata , 700 032, India.}\\
\emph{\small $^{a,1}$ Email: kushalguhabakshi@gmail.com}\\
\emph{\small $^{a,2}$ Email: sagnik62@gmail.com}}

\date{}
\maketitle

\date{}
\maketitle

\begin{abstract}
A zero-sum two person Perfect Information Stochastic game (PISG) under limiting average payoff has a value and both the maximiser and the minimiser have optimal pure stationary strategies. Firstly we form the matrix of undiscounted payoffs corresponding to each pair of pure stationary strategies (for each initial state) of the two players and prove that this matrix has a pure saddle point. Then by using the results by Derman \cite{derman1964sequential} we prove the existence of optimal pure stationary strategy pair of the players. A crude but finite step algorithm is given to compute such an optimal pure stationary strategy pair of the players.

\NI 
\noindent \\

\NI{\bf Keywords:} Stochastic games, Markov Decision Processes, Perfect Information, Stationary Strategies, Linear Programming.\\ 

\NI{\bf AMS subject classifications:} 90C40, 91A15, 90C05.

\end{abstract}
\section{Introduction}
Stochastic games are generalizations of Markov decision processes (MDPs) to the case of two or more players. Shapley (1953) \cite{shapley1953stochastic} introduced 'Stochastic games' in his paper, which is known as Markov games these days. If two players play a matrix game repeatedly over the
infinite time horizon and the limiting average payoff is considered, then the value of this infinitely repeated game coincides with the value of the one shot game (by Folk Theorem \cite{fudenberg2009folk}). Shapley \cite{shapley1953stochastic} introduced the idea of not playing the same matrix game everyday (i.e., in every stage of the game), but playing one among finitely many matrix games, with a motion among them governed by the present game and the actions chosen there in such a manner that the game is certain to stop in finite time. Then the payoffs of the players can be formulated as the ratio of two bilinear forms. Neumann \cite{neumann1971model} established the minimax theorem for such games and Loomis \cite{loomis1946theorem} gave an elementary proof of this theorem. The case of non-terminating limiting average Stochastic games were studied by Gillette \cite{gillette1957stochastic}, Hoffman and Karp \cite{hoffman1966nonterminating} . By undiscounted pay-off we mean limiting average pay-off in this paper. Liggett and Lippman \cite{liggett1969stochastic} previously proved the existence of pure stationary optimal strategy pair of the players in an undiscounted perfect information stochastic game. We propose an alternative proof (with less complexity) of the result by Liggett and Lippman \cite{liggett1969stochastic}. By forming the matrix of undiscounted payoffs corresponding to each pair of pure stationary strategies (for each initial state) of the two players we prove that this matrix has a pure saddle point, which is esentially a pure semi-stationary strategy pair of the players. Then we prove the existence of optimal pure stationary strategy pair of the players by using the results by Derman \cite{derman1964sequential}. We consider the policy-improvement algorithm to compute optimal pure stationary strategy pair of the players. This is a best response algorithm, in which each player looks for his own Blackwell optimal strategy. It is obvious that this is a finite step algorithm and it terminates in finite time by the conjecture $8.1$ of Raghavan and Syed (2002) \cite{raghavan2003policy}. The paper is organized as follows. Section 2 contains definitions and properties of an undiscounted two person zero-sum Stochastic games considered under limiting average pay-off. Section 3 contains main result of this paper. In section 4 we propose a policy improvement algorithm to compute an optimal stationary strategy pair for the players of such perfect information undiscounted Stochastic games. Section 5 contains some numerical examples illustrating our theorem and proposed algorithm.

\section{Preliminaries}
\subsection{Finite two preson zero-sum Stochastic games}
A zero-sum two person finite stochastic game is described by a collection of five objects $\Gamma= <S, \lbrace A(s): s \in S \rbrace, \lbrace B(s) : s \in S \rbrace, q, r>$, where $S =\{0,1,\cdots,z\}$ is the finite non-empty state space and $A(s)=\lbrace 0,1,\cdots,m_s\rbrace, B(s)= \lbrace 0,1,\cdots,n_s\rbrace$ are respectively the non-empty sets of admissible actions of the players I and II respectively in the state $s$. Let us denote $K=\lbrace (s,i,j): s \in S, i \in A(s), j \in B(s) \rbrace$ to be the set of admissible triplets. For each $(s,i,j) \in K$, we denote $q(.\mid s,i,j)$ to be the transition law of the game. Finally $r$ is the real valued functions on $K$, which represents the immediate (expected) reward for the player-I (whereas -$r$ is the reward for the player-II). Let us consider player I as the maximiser and player II as the minimiser in the zero-sum two person stochastic game.\\
The Stochastic game over infinite time is played as follows. At the $0$th decision epoch, the game strats at $s_0 \in S$ and the players I and II simultaneously and independently choose actions $i_0 \in A(s_0)$ and $j_0 \in B(s_0)$ respectively. Consequently player I and II get immediate rewards $r(s_0,i_0,j_0)$ and $-r(s_0,i_0,j_0)$ respectively and the game moves to the state $s_1$ with probability $q(s_1\mid s_0,i_0,j_0)$. After reaching the state $s_1$ on the next decision epoch, the game is repeated over infinite time with the state $s_0$ replaced by $s_1$. Shapley extended the idea of defining SGs where $\sum_{s^{'}\in S}q(s^{'}\mid s,i,j)<1$ for all $(s,i,j) \in K$ and the play terminates with probability $1-\sum_{s^{'}\in S}q(s^{'}\mid s,i,j)<1$. Such games are called ‘stopping SGs’. The ‘non-stopping SGs’ are those where $\sum_{s^{'}\in S}q(s^{'}\mid s,i,j)=1$  for all $(s,i,j)\in K$, i.e., the play never terminates.\\

By a strategy (behavioural) $\pi_1$ of the player I, we mean a sequence $\lbrace (\pi_1)_{n}(.\mid hist_{n}) \rbrace_{n=1}^{\infty}$, where $(\pi_1)_{n}$ specifies which action is to be chosen on the $n$-th decision epoch by associating with each history $hist_{n}$ of the system up to $n$th decision epoch (where $hist_{n}$=$(s_{0},a_{0},b_0,s_{1},a_{1}\\,b_1 \cdots,s_{n-1}, a_{n-1},b_{n-1},s_{n})$ for $n \geq 2$, $hist_{1}=(s_{0})$ and $(s_{k}, a_{k}, j_k) \in K$  are respectively the state and actions of the players at the $k$-th decision epoch) a probability distribution $(\pi_1)_{n}(.\mid hist_{n})$ on $A(s_{n})$.  Behavioural strategy $\pi_2$ for player II can be defined analogously. Generally by any unspecified strategy, we mean behavioural strategy here. We denote $\Pi_1$ and $\Pi_2$ to be the sets of strategy (behavioural) spaces of the players I and II respectively. A strategy $f^{'}=\lbrace f^{'}_n\rbrace_{n=1}^{\infty}$ for the player I is called semi-Markov if for each $n$, $f^{'}_n$ depends on $s_1, s_n$ and the decision epoch number $n$. Similarly we can define a semi-Markov strategy $g^{'}=\{g^{'}_n\}_{n=1}^{\infty}$ for the player II.\\
A strategy $\pi_{1}=\{\pi_{1n}\}_{n=1}^{\infty}$ is called a stationary strategy if $\exists$ a map $f:S\rightarrow \mathbb{P}(A)=\{\mathbb{P}(A(s)):s \in S\}$, where $\mathbb{P}(A(s))$ is the set of probability distribution on $A(s)$ such that $\pi_{1n}=f$ for all $n$ and $f(s) \in \mathbb{P}(A(s))$. A stationary strategy for player I is defined as $z$ tuple $f=(f(1), f(2), \cdots,f(z))$, where each $f(s)$ is the probability distribution on $A(s)$ given by $f(s)=(f(s,1),f(s,2), \cdots, f(s,m_s))$. $f(s,i)$ denotes the probability of choosing action $i$ in the state $s$ by player-I. By similar manner, one can define a stationary strategy $g$ for player II as $g=(g(1),g(2), \cdots, g(z))$ where each $g(s)$ is the probability distribution on $B(s)$. Let us denote $F_1^{s}$ and $F_2^{s}$ to be the set of stationary strategies for player I and II respectively. A semi-stationary strategy is a semi-Markov strategy which is independent of the decision epoch $n$, i.e., for a initial state $s_1$ and present state $s_2$, if a semi-Markov strategy $f^{'}(s_1,s_2,n)$ turns out to be independent of $n$, then we call it a semi-stationary strategy. Let us denote $\xi_{1}$ and $\xi^{2}$ to be the set of semi-stationary strategies for player-I and II respectively. \\
A stationary strategy is called pure if any player selects a particular action with probability $1$ while visiting a state $s$. We denote $F_{1}^{sp}$ and $F_{2}^{sp}$ to be the set of pure stationary strategies of the players I and II respectively. Also $\xi_{1}^{sp}$ and $\xi_{2}^{sp}$ are denoted as the set of pure semi-stationary strategies for the player-I and II respectively.\\
\textbf{Definition 1}
A zero-sum two person SG $\Gamma= <S, \lbrace A(s): s \in S \rbrace, \lbrace B(s) : s \in S \rbrace, q, r>$ is called a perfect information stochastic game (PISG) if the following properties hold\\
(i)$S=S_1 \cup S_2, S_1 \cap S_2 = \phi$.\\
(ii)$ \mid B(s)\mid =1$, for all $s \in S_1$, i.e., on $S_1$ player-II is a dummy.\\
(iii)$\mid A(s)\mid =1$, for all $s \in S_2$, i.e., on $S_2$ player-I is a dummy.\\
\subsection{Undiscounted zero-sum two person stochastic games}
Let $(X_1,A_1,B_1,X_2,A_2,B_2\cdots)$ be a co-ordinate sequence in $S\times(A\times B \times S)^{\infty}$. Given behavioural strategy pair $(\pi_1,\pi_2) \in \Pi_1\times \Pi_2$, initial state $s \in S$, there exists a unique probability measure $P_{\pi_1 \pi_2}(. \mid X_0=s)$ (hence an expectation $E_{\pi_1 \pi_2}(. \mid X_0=s)$) on the product $\sigma$- field of  $S\times(A\times B \times S)^{\infty}$ by Kolmogorov's extension theorem. For a pair of strategies $(\pi_1, \pi_2) \in \Pi_1 \times \Pi_2$ for the players I and II respectively, the limiting average (undiscounted) pay-off for player I, starting from a state $s \in S$ is defined by:
\begin{center}
\begin{equation}
\phi(s,\pi_1, \pi_2)= \liminf_{n \to \infty} \frac{1}{n} E_{\pi_1 \pi_2} \sum_{m=1}^{n} [r(X_m,A_m,B_m) \mid X_0=s]
\end{equation}
\end{center}

Alternatively, for any pair of stationary strategies $(f_1, f_2) \in F_1^{s} \times F_2^{s}$ of player I and II, we write the undiscounted pay-off for player I as:\\
\begin{center}
	\begin{equation}
	\phi(s,f_1,f_2)= \liminf_{n \to \infty} \frac{1}{n} \sum_{m=1}^n r^m(s,f_1,f_2)
	\end{equation}
\end{center}
for all $s \in S$. Where $r^m(s,f_1,f_2)$ is the respectively the expected reward for player I at the $m$ th decision epoch, when player I chooses $f_1$ and player II chooses $f_2$ respectively and the initial state is $s$.\\
\textbf{Definition 2} For a pair of strategies $(f_{1},f_{2}) \in F_{1}^{s}\times F_{2}^{s}$, we define the transition probability matrix by:\\
\begin{center}
	$Q(f_{1},f_{2})= [q(s^{'}\mid s,f_{1}(s),f_{2}(s))]_{s,s^{'}=1}^{z}$,
\end{center}	
where $q(s^{'}\mid s,f_{1}(s),f_{2}(s))=\sum_{i\in A(s)}\sum_{j \in B(s)}q(s^{'}\mid s,i,j)f_{1}(s,i)f_{2}(s,j)$ is the probability is that the system jumps to the state $s^{'}$ from given state $s$ when the players play the stationary strategies $f_{1}$ and $f_{2}$.\\
\textbf{Lemma 1}(Kemeney and Snell, 1976, \cite{kemeny1983finite}) Let $Q$ be any $z \times z$ Markov matrix, then the sequence $\lim_{n \to \infty} \frac{1}{n+1} \sum_{m=0}^{n}Q^m(f_1,f_2)$ converges as $n \rightarrow \infty$ to a Markov matrix $Q^{\ast}$ (the cesaro limiting matrix) such that $QQ^{\ast}=Q^{\ast}Q=Q^{\ast}Q^{\ast}=Q^{\ast}$.\\
For each $(f_{1},f_{2})\in F_{1}\times F_{2}$, we define $r(f_{1},f_{2})=[r(s,f_{1},f_{2})]_{z \times 1}$ as the expected reward, where for each $s \in S$,
\begin{center}
	$r(s,f_{1},f_{2})=\sum_{i\in A(s)}\sum_{j \in B(s)} r(s,i,j)f_{1}(s,i)f_{2}(s,j)$.
\end{center}
Now we have the following result:\\
\textbf{Proposition 1} For each player of pure stationary strategies $(f_{1},f_{2}) \in F_{1}^{sp}\times F_{2}^{sp}$,
\begin{center}
	$\phi(s,f_{1},f_{2})=[Q^{\ast}(f_{1},f_{2})r(f_{1},f_{2})](s) \forall s \in S.$
\end{center}
Where $Q^{\ast}(f_{1},f_{2})$ is the cesaro limiting matrix of $Q(f_{1},f_{2})$.\\
\textbf{Definition 3}
A zero-sum two person undiscounted stochastic game is said to have a value vector $\phi=[\phi(s)]_{N \times 1}$ if $\sup_{\pi_1 \in \Pi_1}\inf_{\pi_2 \in \Pi_2} \phi(s,\pi_1,\pi_2)= \phi(s)= \inf_{\pi_2 \in \Pi_2} \sup_{\pi_1 \in \Pi_1} \phi(s,\pi_1,\pi_2)$ for all $s \in S$. A pair of strategies $(\pi_1^{\ast},\pi_2^{\ast}) \in \Pi_1,\times \Pi_2$ is said to be an optimal strategy pair for the players if $\phi(s,\pi_1^{\ast}, \pi_2) \geq \phi(s) \geq \phi(s, \pi_1,\pi_2^{\ast})$ for all $s \in S$ and all $(\pi_1,\pi_2) \in \Pi_1 \times \Pi_2$.
%\subsection{Undiscounted Markov decision processes}
A finite (state and action spaces) Markov decision process is defined by a collection of four objects $\hat{\Gamma}=<S,\hat{A}=\{A(s):s \in S\}, \hat{q},\hat{r}>$, where $S=\{0,1,\cdots,z\}$ is the finite state space, $\hat{A}(s)=\{1,2,\cdots,d\}$ is the finite set of admissible actions in the state $s$. $\hat{q}(s^{'}\mid s,a)$ is the transition probabilty (i.e., $\hat{q}(s^{'}\mid s,a)\geq 0$ and $\sum_{s^{'}\in S}\hat{q}(s^{'}\mid s,a)=1$) that the next state is $s^{'}$, where $s$ is the initial state and the decision maker chooses action $a$ in the state $s$. The decision process proceeds over infinite time just as stochastic game, where instead of two players we consider a single decision maker. The definition of strategy spaces for the decision maker is same as in the case of stochastic games. Let us denote $\Pi$, $F$, $F_s$ as the set of behavioural, stationary, pure-stationary strategies respectively of the decision maker. Let $(X_1,A_1,X_2,A_2,\cdots)$ be a coordinate sequence in $S\times (\hat{A}\times S)^{\infty}$. Given a behavioural strategy $\pi \in \Pi$, initial state $s \in S$, there exists a unique probability measure $P_{\pi}(. \mid X_0=s)$ (hence an expectation $E_{\pi}(. \mid X_0=s)$) on the product $\sigma$- field of  $S\times (\hat{A}\times S)^{\infty}$ by Kolmogorov's extension theorem.

For a behavioural strategy $\pi \in \Pi$, the expected limiting average pay-off is defined by
\begin{equation}
	\hat{\phi}(s,\pi)= \liminf_{n \to \infty}\frac{1}{n}\sum_{m=1}^{n}E_{\pi} [\hat{r}(X_m,A_m) \mid X_0=s].
\end{equation}
for all $s \in S$.
\section{Main result}
\textbf{Theorem 2}\label{t1}
Any zero-sum two person undiscounted perfect information Stochastic game has a solution in pure stationary strategies.
\begin{proof}
Let $\Gamma=<S=S_{1} \cup S_{2}, A=\{A(s): s\in S_{1}\}, B=\{B(s): s \in S_{2}\}, q, r>$ be a zero-sum two person perfect information Stochastic game under limiting average pay-off, where $S=\{0,1,,\cdots,z\}$ is the finite state space. We assume that in $\mid S_{1}\mid$ number of states, player-II is a dummy and for states $\{ \mid S_{1} \mid +1, \cdots,\mid S_{1} \mid + \mid S_{2} \mid\}$ player-I is a dummy. We assume that in this perfect information game, each player has $d$ number of pure actions in each state where they are non-dummy. Thus, player-I has $\mid S_1 \mid. d$ number of pure actions available in each state $s \in S$, where he/she is non-dummy and player-II has $\mid S_{2} \mid. d$ number of pure actions where he/she is non-dummy in the PISG $\Gamma$. Let us the consider the pay-off matrix\\
\[
A_{\mid S_{1}\mid.d\times \mid S_{2}\mid.d} =
\left[ {\begin{array}{cccc}
\phi(s,f_0,g_0) & \phi(s,f_0,g_1) & \cdots & \phi(s,f_0,g_{\mid S_{2}\mid.d})\\
\phi(s,f_1,g_0) & \phi(s,f_1,g_1) & \cdots & \phi(s,f_1,g_{\mid S_{2}\mid.d})\\
\vdots & \vdots & \ddots & \vdots\\
\phi(s,f_{\mid S_{1}\mid.d},g_0) & \phi(s,f_{\mid S_{1}\mid.d},g_1) & \cdots & \phi(s,f_{\mid S_{1} \mid .d},g_{ \mid S_{2} \mid.d})\\
\end{array} } \right]
\] Where $(f_0,f_1,\cdots,f_{\mid S_{1} \mid .d})$ and $(g_0,g_1,\cdots,g_{\mid S_{2} \mid .d})$ are the pure stationary strategies chosen by player-I and II repsectively. In order to prove the existence of a pure semi-stationary strategy, we have to prove that this matrix has a pure saddle point for each initial state $s \in S$. Now by Shapley \cite{dresher2016advances}, if A is the matrix of a two-person zero-sum game and if every $2\times 2$ submatrix of $A$ has a saddle point, then A has a saddle point. So, we concentrate only on a $2\times2$ matrix and observe if it has a saddle point or not. We consider the $2\times2$ submatrix:\\
\begin{center}
$\left[ {\begin{array}{cccc}
\phi(s,f_i,g_j) & \phi(s,f_i,g_{j^{'}}) \\
\phi(s,f_{i^{'}},g_j) & \phi(s,f_{i^{'}},g_{j^{'}}) \\
\end{array} } \right]$

\end{center}
Where $i^{'}, i \in \{0,1,\cdots, \mid S_{1}\mid.d\}, (i\neq i^{'})$ and $j,j^{'} \in \{0,1,\cdots,\mid S_{2}\mid.d\}, (j\neq j^{'})$. Now, by suitably renumbering the strategies, we can write the above sub-matrix as:\\
\begin{center}
$\left[ {\begin{array}{cccc}
\phi(s,f_1,g_1) & \phi(s,f_1,g_2) \\
\phi(s,f_2,g_1) & \phi(s,f_2,g_2) \\
\end{array} } \right]$
\end{center} Using the definition of $\phi(s,f_{1},f_{2})$ in section $2$, we get that\\
$${\begin{array}{cc}
	\phi(s,f_{i},g_{j})=
	\sum_{s^{'}\in S}q^{\ast}(s^{'}\mid s,f_{i},g_{j})r(s^{'},f_{i},g_{j})\\
=\sum_{t=1}^{S_{1}}[q^{\ast}(t \mid s, f_{i.}) r(t, f_{i.})] + \sum_{v=S_{1}+1}^{S_{1}+S_{2}}[q^{\ast}(v \mid s, g_{.j}) r(v, g_{.j})]
\end{array}}$$
Where \[
f_{i}(s,.)=
\left\{
\begin{array}{lr}
	f_{i.}(s,.)&  s \in S_{1}\\
	1  & s \in S_{2} \\
\end{array}
\right.
\] and \[
g_{j}(s,.)=
\left\{
\begin{array}{lr}
	1  & s \in S_{1} \\
	g_{.j}(s,.)&  s \in S_{2}\\

\end{array}
\right.
\]

We replace $\phi(s,f_{i},g_{j})$ by the expression above in the matrix $A$. We consider the following two cases when $A$ can not have a pure saddle point.\\
Case-1: $\phi(s,f_1,g_1)$ is row minimum and column minimum, $\phi(s,f_1,g_2)$ is row maximum and column maximum, $\phi(s,f_2,g_1)$ is row-maximum and column maximum and $\phi(s,f_2,g_2)$ is row-minimum and column-minimum. These four conditions can be written as: $\phi(s,f_1,g_1)<\phi(s,f_1,g_2)$, $\phi(s,f_1,g_1)<\phi(s,f_2,g_1),$ $\phi(s,f_2,g_2)<\phi(s,f_2,g_1)$, $\phi(s,f_2,g_2)<\phi(s,f_1,g_2)$. Thus we get the following inequalities:\\
\begin{eqnarray}
\sum_{t=1}^{S_{1}}[q^{\ast}(t \mid s, f_{1.}) r(t, f_{1.})] + \sum_{v=S_{1}+1}^{S_{1}+S_{2}}[q^{\ast}(v \mid s, g_{.1}) r(v, g_{.1})]\\\nonumber< \sum_{t=1}^{S_{1}}[q^{\ast}(t \mid s, f_{1.}) r(t, f_{1.})] + \sum_{v=S_{1}+1}^{S_{1}+S_{2}}[q^{\ast}(v \mid s, g_{.2}) r(v, g_{.2})]\\
\noindent
\sum_{t=1}^{S_{1}}[q^{\ast}(t \mid s, f_{1.}) r(t, f_{1.})] + \sum_{v=S_{1}+1}^{S_{1}+S_{2}}[q^{\ast}(v \mid s, g_{.1}) r(v, g_{.1})]\\\nonumber<
\sum_{t=1}^{S_{1}}[q^{\ast}(t \mid s, f_{2.}) r(t, f_{2.})] + \sum_{v=S_{1}+1}^{S_{1}+S_{2}}[q^{\ast}(v \mid s, g_{.1}) r(v, g_{.1})]\\
\noindent
\sum_{t=1}^{S_{1}}[q^{\ast}(t \mid s, f_{2.}) r(t, f_{2.})] + \sum_{v=S_{1}+1}^{S_{1}+S_{2}}[q^{\ast}(v \mid s, g_{.2}) r(v, g_{.2})]\\\nonumber<
\sum_{t=1}^{S_{1}}[q^{\ast}(t \mid s, f_{2.}) r(t, f_{2.})] + \sum_{v=S_{1}+1}^{S_{1}+S_{2}}[q^{\ast}(v \mid s, g_{.1}) r(v, g_{.1})]\\
\noindent
\sum_{t=1}^{S_{1}}[q^{\ast}(t \mid s, f_{2.}) r(t, f_{2.})] + \sum_{v=S_{1}+1}^{S_{1}+S_{2}}[q^{\ast}(v \mid s, g_{.2}) r(v, g_{.2})]\\ \nonumber<
\sum_{t=1}^{S_{1}}[q^{\ast}(t \mid s, f_{1.}) r(t, f_{1.})] + \sum_{v=S_{1}+1}^{S_{1}+S_{2}}[q^{\ast}(v \mid s, g_{.2}) r(v, g_{.2})]
\end{eqnarray}
 Hence, $(3.1)$ yields\\

\begin{equation}
\sum_{v=S_{1}+1}^{S_{1}+S_{2}}q^{\ast}(v\mid s, g_{.2})r(v,g_{.2})- q^{\ast}(v \mid s, g_{.1})r(v,g_{.1}) \textgreater0
\end{equation}

$(3.3)$ yields\\
\begin{equation}
\sum_{v=S_{1}+1}^{S_{1}+S_{2}}q^{\ast}(v\mid s, g_{.1})r(v,g_{.1})- q^{\ast}(v \mid s, g_{.2})r(v,g_{.2}) \textgreater0
\end{equation}

From $(3.5)$ and $(3.6)$ we clearly get a contradiction. Now we consider the next case:\\
\vskip 0.5 cm

Case-2: $\phi(s,f_1,g_1)$ is row maximum and column maximum, $\phi(s,f_1,g_2)$ is row minimum and column minimum, $\phi(s,f_2,g_1)$ is row-minimum and column minimum and $\phi(s,f_2,g_2)$ is row-maximum and column-maximum. These four conditions can be written as: These four conditions can be written as: $\phi(s,f_1,g_1)>\phi(s,f_1,g_2)$, $\phi(s,f_1,g_1)>\phi(s,f_2,g_1)$, $\phi(s,f_2,g_2)>\phi(s,f_2,g_1)$, $\phi(s,f_2,g_2)>\phi(s,f_1,g_2)$. We can re-write them as follows:\\
\begin{eqnarray}
\sum_{t=1}^{S_{1}}[q^{\ast}(t \mid s, f_{1.}) r(t, f_{1.})] + \sum_{v=S_{1}+1}^{S_{1}+S_{2}}[q^{\ast}(v \mid s, g_{.1}) r(v, g_{.1})]\\\nonumber> \sum_{t=1}^{S_{1}}[q^{\ast}(t \mid s, f_{1.}) r(t, f_{1.})] + \sum_{v=S_{1}+1}^{S_{1}+S_{2}}[q^{\ast}(v \mid s, g_{.2}) r(v, g_{.2})]\\
\noindent
\sum_{t=1}^{S_{1}}[q^{\ast}(t \mid s, f_{1.}) r(t, f_{1.})] + \sum_{v=S_{1}+1}^{S_{1}+S_{2}}[q^{\ast}(v \mid s, g_{.1}) r(v, g_{.1})]\\\nonumber>
\sum_{t=1}^{S_{1}}[q^{\ast}(t \mid s, f_{2.}) r(t, f_{2.})] + \sum_{v=S_{1}+1}^{S_{1}+S_{2}}[q^{\ast}(v \mid s, g_{.1}) r(v, g_{.1})]\\
\noindent
\sum_{t=1}^{S_{1}}[q^{\ast}(t \mid s, f_{2.}) r(t, f_{2.})] + \sum_{v=S_{1}+1}^{S_{1}+S_{2}}[q^{\ast}(v \mid s, g_{.2}) r(v, g_{.2})]\\\nonumber>
\sum_{t=1}^{S_{1}}[q^{\ast}(t \mid s, f_{2.}) r(t, f_{2.})] + \sum_{v=S_{1}+1}^{S_{1}+S_{2}}[q^{\ast}(v \mid s, g_{.1}) r(v, g_{.1})]\\
\noindent
\sum_{t=1}^{S_{1}}[q^{\ast}(t \mid s, f_{2.}) r(t, f_{2.})] + \sum_{v=S_{1}+1}^{S_{1}+S_{2}}[q^{\ast}(v \mid s, g_{.2}) r(v, g_{.2})]\\ \nonumber>
\sum_{t=1}^{S_{1}}[q^{\ast}(t \mid s, f_{1.}) r(t, f_{1.})] + \sum_{v=S_{1}+1}^{S_{1}+S_{2}}[q^{\ast}(v \mid s, g_{.2}) r(v, g_{.2})].
\end{eqnarray}
 Hence, $(3.7)$ yields\\
\begin{equation}
\sum_{v=S_{1}+1}^{S_{1}+S_{2}}q^{\ast}(v\mid s, g_{.1})r(v,g_{.1})- q^{\ast}(v \mid s, g_{.2})r(v,g_{.2}) \textgreater0
\end{equation}
$(3.9)$ yields\\
\begin{equation}
\sum_{v=S_{1}+1}^{S_{1}+S_{2}}q^{\ast}(v\mid s, g_{.2})r(v,g_{.2})- q^{\ast}(v \mid s, g_{.1})r(v,g_{.1}) \textgreater0
\end{equation}
From $(3.11)$ and $(3.12)$ we clearly get a contradiction. Thus, every $2\times2$ submatrix has a pure saddle point and by Shapley \cite{dresher2016advances}, we claim that the matrix $A$ has a pure saddle point, namely $(F^{\ast},G^{\ast})$. Now $F^{\ast}=(f_0,f_1,\cdots,f_t,\cdots,f_z)$ and $G^{\ast}=(g_0,g_1,\cdots,g_t,\cdots,g_z)$ where $f_t$ and $g_t$ are the pure stationary strategies for the initial state $t$ chosen by player-I and II respectively.
Now we prove the following lemma to prove the existence of pure stationary strategy pair which is optimal for the players:\\
\begin{lemma}
	Let us fix an initial state $t \in S$ in the PISG $\Gamma$. Suppose $(f_{t},g_{t}) \in F_{1}^{sp} \times F_{2}^{sp}$ be an optimal pure stationary strategy pair of the players satisfying: \\
	\begin{center}
		$\phi(t, f_{t}, g_{t}) \leq \phi(t,f_{t},g)$ $\forall g \in F_{2}^{sp}$ and for some initial state $t \in S$.
	\end{center}
Let us denote $D_{t}$ to be the $t$-th row of the bi-matrix identifying the strategy pair $(f_{t},g_{t})$, i.e., $D_{t}=((f_{t}(t,0),g_{t}(t,0))\cdots,(f_{t}(t,d),g_{t}(t,d))$, where $d$ is the total number of pure actions in state $t$ for both the players. Then $(f^{\ast}, g^{\ast}) \in F_{1}^{sp} \times F_{2}^{sp}$ is a pure stationary strategy pair of the players identified by the bi-matrix $D^{\ast}$ having $D_{t}$ as its $t$-th row. We can write the bi-matrix $D^{\ast}$ as:\\
\[
D^{\ast}_{(z+1) \times d}=
\left[ {\begin{array}{cccc}
		(f_{0}(0,0),g_{0}(0,0)) & (f_{0}(0,1),g_{0}(0,1)) & \cdots & (f_{0}(0,d),g_{0}(0,d))\\
		(f_{1}(1,0),g_{1}(1,0))) & (f_{1}(1,1),g_{1}(1,1)) & \cdots & (f_{1}(1,d),g_{1}(1,d))\\
		\vdots & \vdots & \ddots & \vdots\\
		(f_{z}(z,0),g_{z}(z,0)) & (f_{z}(z,1),g_{z}(z,1)) & \cdots & (f_{z}(z,d),g_{z}(z,d))\\
\end{array} } \right]
\] and the pair $(f^{\ast},g^{\ast})$ satisfies:\\
\begin{center}
	\begin{equation}
		\phi(t, f^{\ast}, g^{\ast}) \leq \phi(t,f^{\ast},g) \forall g \in F_{2}^{sp}, \forall t \in S.
	\end{equation}
 
\end{center}
\end{lemma}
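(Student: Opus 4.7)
My approach is to reduce to a single-controller problem by fixing the constructed maximizer strategy $f^{\ast}$ and then appeal to Derman's theorem for undiscounted MDPs, which is already cited as the main external tool of the paper.

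\textbf{Step 1 (MDP reduction).} Fix $f^{\ast}$ as defined through the bi-matrix $D^{\ast}$. Since $\Gamma$ has perfect information, this fixing turns the game into a finite-state undiscounted MDP $M_{f^{\ast}}$ for the minimizer, with action sets $B(s)$, transition $q(\cdot \mid s, f^{\ast}(s), b)$, and reward $r(s, f^{\ast}(s), b)$; at states $s \in S_{1}$ the minimizer is dummy. By Derman \cite{derman1964sequential}, $M_{f^{\ast}}$ admits a pure stationary policy $\hat g \in F_{2}^{sp}$ that is uniformly optimal, i.e.\ $\phi(t, f^{\ast}, \hat g) = \inf_{g \in F_{2}^{sp}} \phi(t, f^{\ast}, g)$ for every $t \in S$. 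Thus it suffices to show that $g^{\ast}$ itself attains this infimum, i.e.\ $\phi(t, f^{\ast}, g^{\ast}) = \phi(t, f^{\ast}, \hat g)$ for every $t$.

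\textbf{Step 2 (per-state verification).} I would verify this through the average-reward optimality equations for $M_{f^{\ast}}$, namely $V(s) + h(s) = \min_{b \in B(s)} \{ r(s, f^{\ast}(s), b) + \sum_{s'} q(s' \mid s, f^{\ast}(s), b)\, h(s') \}$ for a gain $V$ (constant on each recurrent class) and bias $h$, with a pure stationary $g$ being optimal iff it picks an arg-min at every state. By applying Derman's theorem inside each auxiliary MDP $M_{f_{t}}$, one may assume without loss that each $g_{t}$ is already uniformly optimal in $M_{f_{t}}$, so that $g_{t}(t)$ is an arg-min of the Bellman equation at state $t$ of $M_{f_{t}}$. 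The diagonal property $f^{\ast}(t) = f_{t}(t)$ forced by $D^{\ast}$ then guarantees that the one-step data $r(t, f^{\ast}(t), \cdot)$ and $q(\cdot \mid t, f^{\ast}(t), \cdot)$ at row $t$ of $M_{f^{\ast}}$ coincide exactly with those of $M_{f_{t}}$ at row $t$; hence whether $g^{\ast}(t) = g_{t}(t)$ remains arg-min for $M_{f^{\ast}}$ reduces purely to a question about the bias vector.

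\textbf{Main obstacle.} The delicate point is bias consistency: the bias vector induced by $(f^{\ast}, g^{\ast})$ is not equal to the bias induced by any single $(f_{t}, g_{t})$, because the two Markov chains agree only at the initial step and diverge afterwards. I plan to handle this by a Howard-style policy-improvement contradiction: if $g^{\ast}$ were not optimal in $M_{f^{\ast}}$, there would be a state $t$ and an action $b \in B(t)$ giving a strict one-step improvement in the Bellman inequality of $M_{f^{\ast}}$ at $t$; since $f^{\ast}$ and $f_{t}$ share the same one-step data at $t$, propagating this deviation back to $M_{f_{t}}$ and comparing bias vectors would contradict the arg-min property of $g_{t}(t)$ there. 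Rigorizing this propagation, especially when a deviation alters the recurrent class and hence changes $V$ rather than just $h$, is the main technical hurdle, and is where Derman's multichain MDP framework has to be invoked carefully.
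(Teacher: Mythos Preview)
Your route differs from the paper's, and the obstacle you flag is a genuine gap that your sketch does not close. The paper never touches Bellman equations or bias. Following Derman's state--action frequency technique, it introduces occupation vectors $\xi_{n}^{\pi_1\pi_2}(t)$ with $\phi(t,\pi_1,\pi_2)=\liminf_{n}\Theta(\xi_{n}^{\pi_1\pi_2}(t))$, and argues directly that $\phi(t,f^{\ast},g^{\ast})=\phi(t,f_{t},g_{t})$ for every $t$: a one--step Markov decomposition gives $\Theta(\xi^{f_{t}g_{t}}(t))=\sum_{s'}p(s'\mid t,f^{\ast},g^{\ast})\,\Theta(\xi^{f_{t}g_{t}}(s'))$, the per--state optimality of each $(f_{s'},g_{s'})$ is invoked to replace $(f_{t},g_{t})$ by $(f_{s'},g_{s'})$ inside the sum, and iteration followed by a Ces\`aro limit identifies the result with $\Theta(\xi^{f^{\ast}g^{\ast}}(t))$. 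The claimed inequality then follows from this value equality together with the hypothesis on $(f_{t},g_{t})$. No bias vector ever enters.

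In your scheme, by contrast, the arg-min property of $g_{t}(t)$ that you import from $M_{f_{t}}$ is relative to the bias $h_{t}$ of \emph{that} MDP, whereas what you must verify in $M_{f^{\ast}}$ is the arg-min relative to a different bias $h^{\ast}$. The two Markov chains share only the single row $t$; at every other state the transitions differ, so $h_{t}$ and $h^{\ast}$ have no reason to agree even at the immediate successors of $t$. A strict Bellman improvement at $t$ computed with $h^{\ast}$ therefore says nothing about the Bellman expression computed with $h_{t}$, and your ``propagation back to $M_{f_{t}}$'' does not produce a contradiction: the one--step data match, but the continuation values you would substitute are simply different numbers. In the multichain case a deviation at $t$ can alter the gain itself, which loosens the comparison further. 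Derman's results give existence of a uniformly optimal $\hat g$ in a \emph{fixed} MDP but provide no mechanism for comparing biases across the family $\{M_{f_{t}}\}_{t\in S}$ and $M_{f^{\ast}}$. To rescue your line you would essentially have to establish first that all these MDPs share the same gain and, on each ergodic class, the same bias up to constants --- which is tantamount to the lemma itself. The paper's occupation--measure argument sidesteps this circularity by working at the level of long--run averages rather than the gain/bias decomposition.
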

\begin{proof}
	For an initial state $t \in S(=\{0,1,\cdots,z\})$  and a pair of behavioural strategy $(\pi_{1},\pi_{2}) \in \Pi_{1}\times \Pi_{2}$ of the players, we consider the $(z+1)d^{2}$ component vector:
	\begin{center}
		$\xi_{n}^{\pi_1\pi_2}=\{x_{n000}^{t},x_{n001}^{t},\cdots,x_{ns^{'}ab}^{t},\cdots,x_{nzd^{2}}^{t}\}$
	\end{center}
	where $x_{ns^{'}ab}^{t}=\frac{1}{n}\sum_{m=1}^{n}P_{\pi_1\pi_2}(X_m=s^{'},A_{m}=a,B_{m}=b\mid X_{0}=t)$. Let $\xi^{\pi_{1}\pi_{2}}(t)= \lim_{n \to \infty} \xi_{n}^{\pi_{1}\pi_{2}}(t)$, whenever the limit exists and $\lim_{n \to \infty}x_{ns^{'}ab}^{t}=x_{s^{'}ab}^{t}$. Denote $\Theta(\xi_{n}^{\pi_1\pi_2}(t))=\sum_{s^{'}\in S}\sum_{a \in A(s^{'})} \sum_{b \in B(s^{'})}x_{ns^{'}ab}^{t}.r(s^{'},a,b)$.
	Then
	\begin{eqnarray}
		\phi(t,\pi_1,\pi_2)=\liminf_{n \to \infty} \sum_{s^{'}\in S} \sum_{a \in A(s^{'})} \sum_{b \in B(s^{'})}x_{ns^{'}ab}^{t}.r(s^{'},a,b)
		&=& \liminf_{n \to \infty}[\xi_{n}^{\pi_1\pi_2}(t)]. \bar{r}.\nonumber\\
		&=& \liminf_{n \to \infty} \Theta(\xi_{n}^{\pi_1\pi_2})(t)
	\end{eqnarray}
	Where $\bar{r}$ is the reward vector of order $zd^{2}$. Define $\Theta(\xi^{fg}(t))=\lim_{n \to \infty}\Theta(\xi_{n}^{fg}(t))$, considering that the limit exists for all pure stationary strategy pair $(f,g) \in F_{1}^{sp}\times F_{2}^{sp}$.
	
	Let $p(s^{'}\mid t,f^{\ast},g^{\ast})$ be the transition probability from the state $t$ to $s^{'}$ defined for the strategy pair $(f^{\ast},g^{\ast})$. As this is a stochastic game, we can apply the Markov property that for any two states $x,y \in S$ and $m,n \in \mathbb{N}$,
	\begin{eqnarray}
		p^{n+m}(x,y)&=&P(X_{n+m}=y \mid X_{0}=x)
		\nonumber\\
		&=&\sum_{z \in S}P(X_{n}=z \mid X_{0}=x) P(X_{n+m}=y \mid X_{0}=x,X_{n}=z)
		\nonumber\\
		&=&\sum_{z \in S} P^{n}(x,z) P(X_{n+m}=y \mid X_{0}=x,X_{n}=z)
		\nonumber\\
		&=& p^{n}(x,z)p^{m}(z,y)
	\end{eqnarray} where $p^{m}(z,y)$ is the $m$-th step transition probability from the state $z$ to $y$.
Now using the above property and using the definition of $\xi^{f_{t}g_{t}}(t)$ we have
\begin{eqnarray}
	\Theta(\xi^{f_{t}g_{t}}(t))&=&\Theta(\sum_{s^{'}\in S}p(s^{'}\mid t,f^{\ast},g^{\ast})\xi^{f_{t}g_{t}}(s^{'}))
	\nonumber\\
	&=&\sum_{s^{'}\in S} p(s^{'}\mid t,f^{\ast},g^{\ast}) \Theta(\xi^{f_{t}g_{t}}(s^{'}))
	\text{[as $\Theta$ is a continuous function]}
\end{eqnarray}
 Now, as ($f_{t},g_{t}$) is an optimal pure stationary strategy pair for the players when the initial state is t, we can write $(3.16)$ as
 \begin{equation}
 	\Theta(\xi^{f_{t}g_{t}}(t))=\sum_{s^{'}\in S}p(s^{'}\mid t,f^{\ast},g^{\ast}) \Theta(\xi^{f_{s^{'}}g_{s^{'}}}(s^{'}))
 \end{equation}
Now iterating $(3.17)$ $l$ times we get
\begin{eqnarray}
\Theta(\xi^{f_{t}g_{t}}(t))&=&\sum_{s^{'}\in S}p^{l}(s^{'}\mid t,f^{\ast},g^{\ast}) \Theta(\xi^{f_{s^{'}}g_{s^{'}}}(s^{'}))
\end{eqnarray}
If we expand the right hand side of the above expression, the right hand side becomes:\\
\begin{equation}
p^{l}(0\mid t,f^{\ast},g^{\ast})[\sum_{s\in S}\sum_{a \in A(s)}\sum_{b \in B(s)}r_{0}(s,a,b)x_{sab}^{0}] +\cdots+ p^{l}(z \mid t,f^{\ast},g^{\ast})[\sum_{s \in S}\sum_{a \in A(s)}\\\sum_{b \in B(s)}r_{z}(s,a,b)x^{z}_{sab}]
\end{equation}
Let $r^{'}(s,a,b)=r_{0}(s,a,b)+r_{1}(s,a,b)+\cdots+r_{z}(s,a,b)$. Then we can write $(3.19)$ as:\\
\begin{eqnarray}
\Theta(\xi^{f_{t}g_{t}}(t))&=&\sum_{s \in S} \sum_{a \in A(s)} \sum_{b \in B(s)}r^{'}(s,a,b).x_{sab}^{t}
\nonumber\\
&=&\Theta(\xi^{f^{\ast}g^{\ast}}(t))
\end{eqnarray}
Thus form $(3.20)$ and $(3.13)$ we get that\\
\begin{center}
	$\phi(t,f^{\ast},g^{\ast})\leq \phi(t,f^{\ast},g) \forall t \in S$ and $\forall g \in F_{2}^{sp}.$ 
\end{center}
\end{proof}
By similar manner we can show that $\phi(t,f^{\ast},g^{\ast})\geq \phi(t,f,g^{\ast}) \forall t \in S$ and $\forall f\in F_{1}^{sp}$. Thus the pair ($f^{\ast},g^{\ast})$ is the optimal pure stationary strategy pair of the players in the PISG $\Gamma$.
\end{proof}
\section{Algorithm to solve a zero-sum two person perfect information stochastic game}
Let $\Gamma$ be a zero-sum two person perfect information stochastic game. We consider the following policy-improvement algorithm to compute optimal stationary strategy of the players. This is a best response algorithm, in which each player looks for his own Blackwell optimal strategy. The algorithm is stated below:\\
\textbf{Step 1:} Choose a random pure strategy for player-II $g_{0}$ and set $k=0$.\\
\textbf{Step 2:} Find the Blackwell optimal strategy $f_{k}$ for player-I in the MDP $\Gamma(g_{k})$.\\
\textbf{Step 3:} \textbf{if} $g_{k}$ is blackwell optimal strategy for player-II in $\Gamma(f_{k})$, set $(f^{\ast}, g^{\ast})=(f_{k},g_{k})$ and stop. \\
\textbf{Step 4:} \textbf{else} find the blackwell optimal strategy $g_{k+1}$ for player-II in the MDP $\Gamma(f_{k})$, set $k=k+1$ and go to step $2$. 
It is obvious that this is a finite step algorithm and it terminates in finite time by the conjecture $8.1$ of Raghavan and Syed (2002) \cite{raghavan2003policy}. The process of finding a Blackwell optimal strategy for an undiscounred MDP was proposed by Hordijk et al.(1985) \cite{hordijk1985sensitivity}. It consists of a linear programming problem with several parameters as given below:\\
\begin{center}
	$\max \sum_{s=1}^{z} \sum_{a \in A(s)} r(s,a)w_{sa}$\\
\begin{flushleft}
		subject to:\\
\end{flushleft}

	$\sum_{s=1}^{z} \sum_{a \in A(s)}(\delta(s,s^{'})- q(s^{'}\mid s,a))w_{sa}=0$, $s^{'} \in S$\\
	$\sum_{a \in A(s)} w_{sa} + \sum_{s=1}^{z} \sum_{a \in A(s)}(\delta(s,s^{'})- q(s^{'}\mid s,a))y_{sa} =\beta_{s}$, $s^{'}\in S$\\
	$w_{sa} \geq 0$
\end{center}
where $\beta_{s}>0$ are given numbers for each $s \in S$, such that $\sum_{s \in S}\beta_{s}=1$.
The Blackwell optimal pure stationary strategy is computed as:\\
\begin{center}
	$f^{\ast}(s)= \frac{w^{\ast}_{sa}}{\sum_{a \in A(s) w^{\ast}_{sa}}}$
\end{center}
where $w^{\ast}_{sa}$ is the optimal solutionof the above LP. By Hordijk et al.\cite{hordijk1985sensitivity}, this pure stationary strategy is average optimal as well. We elaborate the above algorithm by following examples:\\
\section{Numerical examples}
 \textbf{Example 1}: Consider a PISG $\Gamma$ with three states $S=\{1,2,3\}$, $A(1)=\{1,2\}=A(2)$, $A(3)=\{1\}$, $B(1)=\{1\}=B(2)$ AND $B(3)=\{1,2,3\}$. In this example player-I is a dummy player here for the state $3$ and player-II is dummy for states $1$ and $2$. Rewards and  transition probabilities for the players are given below
 \vskip 2 cm
 
 State-1: \begin{tabular}{|r|}
 	\hline
 	5\\ ($\frac{1}{2}$, $\frac{1}{2}$, 0)\\
 	\hline
 	7\\ (0,1,0)\\
 	\hline
 \end{tabular}
 State-2: \begin{tabular}{|r|}
 	\hline
 	1\\ ($\frac{1}{3}$,0, $\frac{2}{3}$)\\
 	\hline
 	0.5\\ (0,0,1)\\
 	\hline
 \end{tabular}
 State-3: \begin{tabular}{|r|}
 	\hline
 	3\\ (0, $\frac{1}{2}$,$\frac{1}{2}$)\\  	
 	\hline
 	4\\ (1,0,0)\\
 	\hline
 	2 \\ ($\frac{1}{2}$, $\frac{1}{4}$, $\frac{1}{4}$)\\
 	\hline
 \end{tabular}\\
where a cell \begin{tabular}{|r|}
	\hline
	r\\ ($q_{1}$, $q_{2}$, $q_{3}$)\\
	\hline
\end{tabular} 
	represents that $r$ is the immediate reward function and $(q_{1}, q_{2}, q_{3})$ are the transition probabilities that the next states are $1$, $2$ and $3$ respectively if this cell is chosen at present state. The pure strategies for player-I are: $f_{0}=\{(1,0),(1,0),1\}$,
	$f_{1}=\{(1,0),(0,1),1\}$, $f_{2}=\{(0,1),(1,0)\}$,$f_{3}=\{(0,1),(0,1)\}$. The pure strategies of player-II are: $g_{0}=\{1,1,(1,0,0)\}$, $g_{1}=\{1,1,(0,1,0)\}$, $g_{2}=\{1,1,(0,0,1)\}$. Firstly set $k=0$ and we fix the strategy $g_{0}$ of the player-II in $\Gamma$. Thus we get a reduced MDP $\Gamma(g_{0})$ given below:\\
	\vskip 1cm
	 State-1: \begin{tabular}{|r|}
		\hline
		5\\ ($\frac{1}{2}$, $\frac{1}{2}$, 0)\\
		\hline
		7\\ (0,1,0)\\
		\hline
	\end{tabular}
	State-2: \begin{tabular}{|r|}
		\hline
		1\\ ($\frac{1}{3}$,0, $\frac{2}{3}$)\\
		\hline
		0.5\\ (0,0,1)\\
		\hline
	\end{tabular}
	State-3: \begin{tabular}{|r|}
		\hline
		3\\ (0, $\frac{1}{2}$,$\frac{1}{2}$)\\  	
		\hline
	\end{tabular}\\
Now we formulate the following linear programming problem inthe variables $x=(x_{11},x_{12},x_{21},\\x_{22},x_{31})$ and $y=(y_{11},y_{12},y_{21},y_{22},y_{31})$ to obtain player-I's Blackwell optimal strategy:\\
	 \begin{center}
	 	$\max R = 5 x_{11} + 7 x_{12} + x_{21} + 0.5 x_{22} + 3 x_{31}$
	 \end{center}
	 subject to
	 \begin{align}
	 	3 x_{11} + 6 x_{12} - 2_{x21} =0\\
	 	-3 x_{11} - 6 x_{12} + 6 x_{21} + 6x_{22} - 3 x_{31} =0\\
	 	-8x_{21} - 12 x_{22} + 6 x_{31} =0\\
	 	6 x_{11} + 6 x_{12} +3 y_{11} + 6 y_{12} - 2 y_{21} =6 \beta_{1}\\
	 	2 x_{21} + 2x_{22} -y_{11} -2y_{12} + 2y_{21} + 2.y_{22} - y_{31} =2\beta_{2}\\
	 	12 x_{31} - 8 y_{21} -12 y_{22} + 6 y_{31} = 12\beta_{3}\\
	 	x,y \geq0.
	 \end{align}
We fix $\beta_{1}=\beta_{2}=\beta_{3}=\frac{1}{3}$. The solution of the above linear programming problem bt dual-simplex method is given below:\\
 $\max R= 2.778$, $x=(0.222,0,0.333,0,0.444)$, $y=(0,0.111,0,0.111,0)$.\\
 Now by the method to compute optimal pure stationary strategy described in section $4$, we get that $f_{0}=\{(1,0),(0,1),1\}$ is the optimal pure stationary strategy for player-I in $\Gamma(g_{0})$. Now we fix this strategy for player-I. Thus we get a resultant MDP as follows:\\
 State-1: \begin{tabular}{|r|}
	\hline
	5\\ ($\frac{1}{2}$, $\frac{1}{2}$, 0)\\
	\hline
\end{tabular}
State-2: \begin{tabular}{|r|}
	\hline
	1\\ ($\frac{1}{3}$,0, $\frac{2}{3}$)\\
	\hline
\end{tabular}
State-3: \begin{tabular}{|r|}
	\hline
	3\\ (0, $\frac{1}{2}$,$\frac{1}{2}$)\\  	
	\hline
	4\\ (1,0,0)\\
	\hline
	2 \\ ($\frac{1}{2}$, $\frac{1}{4}$, $\frac{1}{4}$)\\
	\hline
\end{tabular}
We formulate the linear programming problem of the above MDP for the variables $x=(x_{11},x_{21},x_{31},x_{32},x_{33})$ and $y=(y_{11},y_{21},y_{31},y_{32},y_{33})$ as follows:\\
\begin{center}
	$\min R^{'}=5x_{11} + x_{21} + 3x_{31} + 4 x_{32} + 2 x_{33}$
\end{center}
subject to
\begin{align}
	3 x_{11} -2  x_{21}  -6x_{32}- 3x_{33} =0\\
	-2 x_{11} + 4 x_{21} - 2 x_{31} - x_{33} =0\\
	-8x_{21}  + 6x_{31} + 12 x_{32} + 9 x_{33} =0\\
	6 x_{11}  +3 y_{11} - 2 y_{21} - 6 y_{32} - 3y_{33} =6 \beta_{1}\\
	4 x_{21}  - 2y_{11} + 4y_{21} - 2y_{31}  - y_{33} =4\beta_{2}\\
	12 x_{31} + 12x_{32} + 12x_{33} - 8 y_{21} + 6 y_{31} + 12y_{32} + 9y_{33} = 12\beta_{3}\\
	x,y \geq0.
\end{align}
The solution of the above LP by dual-simplex method is given below:\\
$\min R^{'}=2.778$, $x=(0.222,0.333,0.444,0,0)$, $y=(0.333,0.1667,0,0,0)$. So by the same method described in section $4$, we compute the optimal pure stationary strategy for player-II as:
$g_{0}=\{1,1,(1,0,0)\}$. Thus the algorithm stops in this step and we get the optimal pure (limiting average) stationary strategy $(F^{\ast},G^{\ast})=(f_{0},g_{0})$.

\textbf{Example 2:} Consider a PISG $\Gamma$ with four states $S=\{1,2,3,4\}$, $A(1)=\{1,2\}=A(2)$, $A(3)=\{1\}$, $B(1)=\{1\}=B(2)$ AND $B(3)=\{1,2,3\}$. In this example player-I is a dummy player here for the state $3$ and player-II is dummy for states $1$ and $2$. Rewards and  transition probabilities for the players are given below
\vskip 2 cm

State-1: \begin{tabular}{|r|}
	\hline
	2\\ ($\frac{1}{2}$, $\frac{1}{2}$, 0,0)\\
	\hline
	3\\ (0,1,0,0)\\
	\hline
\end{tabular}
State-2: \begin{tabular}{|r|}
	\hline
	1\\ ($\frac{1}{3}$,0, $\frac{2}{3}$,0)\\
	\hline
	0.5\\ (0,0,1,0)\\
	\hline
\end{tabular}
State-3: \begin{tabular}{|c|c|}
	\hline
    $5$ & $0$\\
	$(0,0,\frac{1}{2},\frac{1}{2})$ & $(0,0,1,0)$\\
	\hline
\end{tabular}\\
\begin{center}
	State 4:\begin{tabular}{|c|c|}
		\hline
		$11$ & $12$\\
		$(\frac{1}{2},0,\frac{1}{2},0)$ & $(1,0,0,0)$\\
		\hline
	\end{tabular}\\ 
\end{center}

where a cell \begin{tabular}{|r|}
	\hline
	$r$\\ ($q_{1}$, $q_{2}$, $q_{3}$, $q_{4}$)\\
	\hline
\end{tabular} 
represents that $r$ is the immediate reward function and $(q_{1}, q_{2}, q_{3},q_{4})$ are the transition probabilities that the next states are $1$, $2$, $3$ and $4$ respectively if this cell is chosen at present state. The pure strategies for player-I are: $f_{0}=\{(1,0),(1,0),1,1\}$,
$f_{1}=\{(1,0),(0,1),1,1\}$, $f_{2}=\{(0,1),(1,0),1,1\}$,$f_{3}=\{(0,1),(0,1),1,1\}$. The pure strategies of player-II are: $g_{0}=\{1,1,(1,0),(1,0)\}$, $g_{1}=\{1,1,(1,0),(0,1)\}$, $g_{2}=\{1,1,(0,1),(1,0)\}$ and $g_{3}=\{1,1,(0,1),(0,1)\}$. Firstly set $k=0$ and we fix the strategy $g_{0}$ of the player-II in $\Gamma$. Thus we get a reduced MDP $\Gamma(g_{0})$ given below:\\
\vskip 1cm
State-1: \begin{tabular}{|r|}
	\hline
	2\\ ($\frac{1}{2}$, $\frac{1}{2}$, 0,0)\\
	\hline
	3\\ (0,1,0,0)\\
	\hline
\end{tabular}
State-2: \begin{tabular}{|r|}
	\hline
	1\\ ($\frac{1}{3}$,0, $\frac{2}{3}$,0)\\
	\hline
	0.5\\ (0,0,1,0)\\
	\hline
\end{tabular}
State-3:\begin{tabular}{|r|}
	\hline
	$5$\\ ($0,0,\frac{1}{2},\frac{1}{2}$)\\
	\hline
\end{tabular}
\begin{center}
	State 4: \begin{tabular}{|r|}
		\hline
		$11$\\ ($\frac{1}{2}, 0, \frac{1}{2}, 0$)\\
		\hline
	\end{tabular} 
\end{center} 

Now we formulate the following linear programming problem in the variables $x=(x_{11},x_{12},x_{21},x_{22},x_{31},x_{32},x_{41},x_{42})$ and $y=(y_{11},y_{12},y_{21},y_{22},y_{31},y_{32},y_{41},y_{42})$ to obtain player-I's Blackwell optimal strategy:\\
\begin{center}
	$\max R = 2 x_{11} + 3 x_{12} + 0.5 x_{22} + 5 x_{31} + 11 x_{41}$
\end{center}
subject to
\begin{align}
	3 x_{11} + 6 x_{12} - 2 x_{21} - 3x_{41}=0\\
	-3 x_{11} - 6 x_{12} + 6 x_{21} + 6x_{22} =0\\
	-4x_{21} - 6 x_{22} + 3 x_{31} - 3x_{41} =0\\
	-3x_{31}+ 6x_{41}=0\\
	6 x_{11} + 6 x_{12} +3 y_{11} + 6 y_{12} - 2_y{21} - 3y_{41} =6 \beta_{1}\\
	2 x_{21} + 2x_{22} - 3 y_{11} - 6 y_{12} + 6 y_{21} + 6y_{22} =2\beta_{2}\\
	6 x_{31} - -4y_{21} - 6 y_{22} + 3 y_{31} - 3y_{41} = 12\beta_{3}\\
	2x_{41}-x_{31} + 2x_{41}=2 \beta_{4}\\
	x,y \geq0.
\end{align}
We fix $\beta_{1}=\beta_{2}=\beta_{3}= \beta_{4}=\frac{1}{4}$. The solution of the above linear programming problem bt dual-simplex method is given below:\\
$\max R= 5.6875$, $x=(0,0.1250,0,0.1250,0.5000,0.2500)$, $y=(0,0.1250,0,0.2500,0,0)$.\\
Now by the method to compute optimal pure stationary strategy described in section $4$, we get that $f_{0}=\{(0,1),(0,1),1,1\}$ is the optimal pure stationary strategy for player-I in $\Gamma(g_{0})$. Now we fix this strategy for player-I. Thus we get a resultant MDP as follows:\\
State-1: \begin{tabular}{|r|}
	\hline
	3\\ $(0,1,0,0)$\\
	\hline
\end{tabular}
State-2: \begin{tabular}{|r|}
	\hline
	0.5\\ (0,0,1,0)\\
	\hline
\end{tabular}
State-3: \begin{tabular}{|c|c|}
	\hline
	$5$ & $0$\\
	$(0,0,\frac{1}{2},\frac{1}{2})$ & $(0,0,1,0)$\\
	\hline
\end{tabular}\\
\begin{center}
	State 4:\begin{tabular}{|c|c|}
		\hline
		$11$ & $12$\\
		$(\frac{1}{2},0,\frac{1}{2},0)$ & $(1,0,0,0)$\\
		\hline
	\end{tabular}\\ 
\end{center}
Now we formulate the following linear programming problem in the variables $x=(x_{11},x_{12},x_{21},x_{22},\\x_{31},x_{32},x_{41},x_{42})$ and $y=(y_{11},y_{12},y_{21},y_{22},y_{31},y_{32},y_{41},y_{42})$ to obtain player-I's Blackwell optimal strategy:\\
\begin{center}
	$\min R = 2 x_{11} + 3 x_{21} + 0.5 x_{31} + 5 x_{41} + 11 x_{42}$
\end{center}
subject to
\begin{align}
	 2x_{11} - x_{41} - 2x_{42}=0\\
	- x_{11} + x_{21} =0\\
	-2x_{21} + x_{31} - x_{41} =0\\
	-x_{31}+ 2x_{41} + 2x_{42}=0\\
	2 x_{11} + 2y_{11} - y_{41} - 2y_{42} =2 \beta_{1}\\
     x_{21} - y_{11} + y_{21} =\beta_{2}\\
	 2x_{31} + 2x_{32} -2y_{21} + y_{31} - y_{41}= 2\beta_{3}\\
	x_{41} + x_{42} -y_{31}+ 2y_{41} + y_{42}=2 \beta_{4}\\
	x,y \geq0.
\end{align}
The solution of the above LP by dual-simplex method is given below:\\
$\min R^{'}=2.778$, $x=(0.1250,0.1250,0.5,0,0.25,0)$, $y=(0.1250,0.2500,0,0,0,0)$. So by the same method described in section $4$, we compute the optimal pure stationary strategy for player-II as:
$g_{0}=\{1,1,(1,0),(0,1))\}$. Thus the algorithm stops in this step and we get the optimal pure (limiting average) stationary strategy $(F^{\ast},G^{\ast})=(f_{0},g_{0})$.

\bibliographystyle{plain}
\bibliography{pisg}
\end{document}